\documentclass[a4paper]{amsart}
\usepackage{url}
\usepackage{amssymb}

\swapnumbers
\theoremstyle{plain}
\newtheorem{thm}{Theorem}[section]
\newtheorem{lem}[thm]{Lemma}

\theoremstyle{definition}
\newtheorem{dfn}[thm]{Definition}

\newtheorem{fct}[thm]{Fact}

\theoremstyle{remark}

\providecommand{\ccc}{\textup{ccc}}
\providecommand{\card}[1]{\lvert#1\rvert}

\DeclareMathOperator{\dom}{dom}

\DeclareMathOperator{\pred}{pred}
\DeclareMathOperator{\ran}{ran}

\title{Lightface $\Sigma^1_2$-indescribable cardinals }

\author{David Schrittesser}

\thanks{During the prepararion of this article, the author was supported by FWF-Project 16334; Also, I would like to thank everybody at the Centre de Recerca Matem\`atica, Barcelona, for their support.}
\address{Kurt G\"odel Research Center for Mathematical Logic\\
 W\"ahringer Stra\ss e 25\\
 A-1090 Wien\\
 Austria}
\urladdr{\url{http://www.logic.univie.ac.at/~david}}
\email{david@logic.univie.ac.at}

\date{March 2005}
\subjclass[2000]{Primary 03E35, 03E55, 03E65}
\keywords{Forcing axioms, indescribable cardinals}

\begin{document}

\begin{abstract}$\Sigma^1_3$-absoluteness for $\ccc$ forcing means that for any $\ccc$ forcing $P$, ${H_{\omega_1}}^V \prec_{\Sigma_2}{H_{\omega_1}}^{V^P}$.
``$\omega_1$ inaccessible to reals'' means that for any real $r$, ${\omega_1}^{L[r]}<\omega_1$.
To measure the exact consistency strength of ``$\Sigma^1_3$-absoluteness for $\ccc$ forcing and $\omega_1$ is inaccessible to reals'', we introduce a weak version of a
weakly compact cardinal, namely, a (lightface) $\Sigma^1_2$-indescribable cardinal; $\kappa$ has this property exactly if it is inaccessible and $H_\kappa \prec_{\Sigma_2} H_{\kappa^+}$.
\end{abstract}

\maketitle

\section{Introduction}

The result presented in this paper contributes to the study of principles of generic absoluteness (see \cite{bagaria_axioms_generic_absoluteness} for a survey of this area).
These principles can be seen as generalizations of the bounded forcing axioms, like $MA$, $BPFA$, $BSPFA$ and $BMM$ (see \cite{bagaria_bfa_generic_absoluteness}).
Here is the general form of such a principle:
\begin{dfn}
Let $W$ be a definable subclass of $V$, $\Phi$ a class of formulas with parameters and $\Gamma$ a class of forcing notions.
$\mathcal{A}(W,\Phi,\Gamma)$ is the statement that for any formula $\phi$ that belongs to $\Phi$ and for any $P \in \Gamma$, $W^{V}\vDash \phi \iff W^{V^P}\vDash \phi$.
\end{dfn}
We denote $\mathcal{A}(H_{\omega_1}, \mathbf{\Sigma_2}, \Gamma)$ by $\Sigma^1_3$-Abs$(\Gamma)$, pronounced ``$\Sigma^1_3$-absoluteness (for $\Gamma$)'', as $\Sigma_2$ formulas over $H_{\omega_1}$ are equivalent to $\Sigma^1_3$ formulas.
In this special case, the consistency strengths for various classes $\Gamma$ are known.
See \cite{generic_absoluteness} and \cite{more_generic_absoluteness} for proofs (in the table, we denote by $\ccc$, $proper$, $semi-proper$ the obvious classes of forcing notions; $set$ denotes the class of all set-sized forcing notions).
\begin{center}
  \begin{tabular}{ c | c | c | c }
    $\Sigma^1_3$-Abs$(\ccc)$ & $\Sigma^1_3$-Abs$(proper)$ & $\Sigma^1_3$-Abs$(semi-proper)$ & $\Sigma^1_3$-Abs$(set)$ \\ \hline
    ZFC &  ZFC & ZFC & reflecting
  \end{tabular}
\end{center}
\begin{dfn}\cite{goldstern_shelah_bpfa}\label{reflecting}
A regular cardinal $\kappa$ is  \textit{reflecting} \textit{iff} $V_\kappa \prec_{\Sigma_2} V$, or, equivalently, \textit{iff} for any regular $\theta$ and any formula $\phi$ with parameters from $V_\kappa$ such that
$H_\theta \vDash \phi$, there is a regular $\gamma < \kappa$ such that
$H_\gamma \vDash \phi$.
\end{dfn}

Forcing axioms are often considered with respect to their interaction with other interesting propositions.
For example, knowing how to construct a model of $MA$, one may ask what is needed to obtain a model of $MA\; \wedge$ ``every projective set of reals is Lebesgue measurable'' or $MA \;\wedge $ ``$\omega_1$ is inaccessible to reals'' (as in \cite{equicon_results}).
\begin{dfn}

We say $\omega_1$ is \textit{inaccessible to reals} \textit{iff} for any real $r$, ${\omega_1}^{L[r]} < \omega_1$.
\end{dfn}

The fact that $\Sigma^1_3$-Abs$(set)$ implies that $\omega_1$ is inaccessible to reals (in fact $\Sigma^1_3$-absoluteness for $\omega_1$-preserving forcing suffices) is pivotal in setting it apart from the weaker axioms, those that are equiconsistent with $ZFC$.
One can show that $\Sigma^1_3$-Abs$(proper)$ together with the assumption that $\omega_1$ is inaccessible to reals has the full strength of a reflecting cardinal (\cite{more_generic_absoluteness}; see also \cite{thesis}).
The question is: how does the additional assumption that $\omega_1$ is inaccessible to reals interact with those forcing axioms that do not directly imply it? The answer is made available in the following table, where the additional hypothesis is indicated by ``$\Omega$''.
We present a proof of the result on the far left \footnote{This material was part of my thesis \cite{thesis} and was sketched during a talk at the Summer Workshop in Fine Structure Theory (SWIFT) in Bonn, July 2003, and content of a talk given in M\"unster in February 2004.
I'd like to thank Ralph Schindler for his kind invitation and hospitality.
}.
For this we introduce (in section \ref{indescribable}) a large cardinal property, ``lightface $\Sigma^1_2$-indescribable'' (denoted by ``lf-$\Sigma^1_2$-id'' in the table).
\begin{center}
  \begin{tabular}{ c | c | c | c }
    $\Sigma^1_3$-Abs$(\ccc)$ & $\Sigma^1_3$-Abs$(proper)$ & $\Sigma^1_3$-Abs$(semi-proper)$ & $\Sigma^1_3$-Abs$(set)$ \\
     $\wedge \;\Omega$ &            $\wedge\; \Omega$ &                $\wedge\; \Omega$ &                $\wedge\; \Omega$         \\
 \hline
     lf-$\Sigma^1_2$-id &  reflecting & reflecting & reflecting
  \end{tabular}
\end{center}

The proof uses (among other things) a coding technique developed in \cite[theorem~C]{equicon_results}, which we adapt to our needs in section \ref{coding}.
 Also, the proof  uses a bit of fine structure theory; as we are making an effort to make this paper very accessible, we will review some of the facts we rely on in the next section.

\section{Notation, Facts, Definitions}\label{facts}

To define the large cardinal property hinted at, we need a bit of second-order logic.
We differentiate second-order from first-order variables or constant symbols by using upper case for the former and lower case for the latter.
We remind the reader that $\Phi(X_0, \hdots X_k)$ is a $\Sigma^1_n$-formula if $\Phi$ starts with a block of existential quantifiers over second-order variables, with $n$ changes of quantifier, followed by an arbitrary number of first-order quantifiers.
$\Pi^1_n$ means negation of $\Sigma^1_n$.
Remember that for a first-order formula $\Phi$ (mentioning some second-order variables $Y_0, \hdots, Y_r, X_0, \hdots, X_r$)
\[
\langle M,  X_0, \hdots, X_k \rangle \vDash \exists Y_0 \hdots Q Y_r \Phi(Y_0, \hdots, Y_r, X_0, \hdots, X_r)\]
(where $Q$ denotes $\exists$ or $\forall$) exactly if 
\[
\begin{array}{c}
\exists Y_0 \in \mathcal{P}(M) \hdots Q Y_r \in \mathcal{P}(M)
\\ \text{ such that }\langle M, X_0, \hdots, X_k, Y_0, \hdots, Y_r \rangle \vDash \Phi(Y_0, \hdots, Y_r, X_0, \hdots, X_r)
\end{array}
\]
For an elaborate definition, see \cite[p.~7f\,]{kanamori:03}.

Let $(\phi_i)_{i \in \omega}$ enumerate all $\Delta_0$ formulas.
Now we define the value of the $i$-th $\Sigma_1$-Skolem function for $L_\alpha$ ($\alpha$ a limit ordinal), denoted by $h^{L_\alpha}_i$: if $L_\alpha \vDash \exists v \exists w \phi_i(v,w,x)$ we say $h^{L_\alpha}_i(x)=y$ just if $(y,z)$ is the $\leq_L$-least pair such that $L_\alpha \vDash \phi_i(y,z,x)$; $h^{L_\alpha}_i(x)=\emptyset$ otherwise.
By the $\Sigma_1$-Skolem hull of $M$ inside $L_\alpha$, denoted by $h^{L_\alpha}_{\Sigma_1}(M)$, we mean the least set containing $M$ and closed under all $h^{L_\alpha}_i$.
In this case, $h^{L_\alpha}_{\Sigma_1}(M)=\bigcup_{i \in \omega} {h^{L_\alpha}_i }[ M ]$.
Most importantly, $\Sigma_1$-Skolem functions are uniformly $\Sigma_1$-definable, i.e.
there is a $\Delta_0$ formula $\Phi$ such that $h^{L_\alpha}_i(x)=y$ if and only if $L_\alpha \vDash \exists z \Phi(i,x,y,z)$.
We will make use of simple facts about Skolem hulls, like:
\begin{fct}\label{skolemlower}
If $\langle L_\alpha, x \rangle$ is isomorphic to $\langle h^{L_{\bar \alpha} }_{\Sigma_1}(M\cup\{\bar x \}),\bar x \rangle$ and $M$ is transitive, then $\langle L_\alpha, x \rangle = \langle h^{L_\alpha}_{\Sigma_1}(M\cup\{x \}),x\rangle$.
\end{fct}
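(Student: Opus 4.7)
The plan is to let $\pi\colon h^{L_{\bar\alpha}}_{\Sigma_1}(M \cup \{\bar x\}) \to L_\alpha$ be the given isomorphism (so $\pi(\bar x)=x$), show that $\pi$ is the identity on $M$, and then show that $\pi$ takes each Skolem term $h^{L_{\bar\alpha}}_i(\vec m,\bar x)$ to the corresponding $h^{L_\alpha}_i(\vec m,x)$. The conclusion $L_\alpha = h^{L_\alpha}_{\Sigma_1}(M\cup\{x\})$ then follows immediately from the surjectivity of $\pi$.

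First I would argue $\pi \upharpoonright M = \mathrm{id}_M$. Since $L_\alpha$ is transitive and $\pi$ is an $\in$-isomorphism, $\pi$ must agree with the Mostowski collapse of the hull. The hull contains $M$ by definition, and because $M$ is transitive a straightforward $\in$-induction shows the collapse fixes $M$ pointwise. In particular $M \subseteq L_\alpha$ and $\pi$ takes the designated generators $M\cup\{\bar x\}$ to $M\cup\{x\}$.

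Next I would use the uniform $\Sigma_1$-definability of Skolem functions mentioned in the preceding paragraph of the paper: there is a single $\Delta_0$-formula $\Phi$ with $h^{L_\beta}_i(u)=v \iff L_\beta \vDash \exists z\,\Phi(i,u,v,z)$ for every limit $\beta$. Any element of the hull has the form $y=h^{L_{\bar\alpha}}_i(\vec m,\bar x)$ with $\vec m\in M^{<\omega}$, and by $\Sigma_1$-elementarity of the hull in $L_{\bar\alpha}$ (which is what closure under the $h_i$ provides, via Tarski--Vaught) the hull itself witnesses $\exists z\,\Phi(i,(\vec m,\bar x),y,z)$. Applying the isomorphism $\pi$ transports this $\Sigma_1$-fact to $L_\alpha$, yielding $\pi(y)=h^{L_\alpha}_i(\vec m,x)$. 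Hence $\pi[\text{hull}] = h^{L_\alpha}_{\Sigma_1}(M\cup\{x\})$; since $\pi$ is onto $L_\alpha$, we are done.

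The main obstacle is essentially bookkeeping: one must be careful to distinguish Skolem functions computed in $L_{\bar\alpha}$ from those computed in the hull (or in $L_\alpha$), and this is exactly where uniform $\Sigma_1$-definability is used. The only other small point is verifying $\Sigma_1$-elementarity of the hull in $L_{\bar\alpha}$, which follows directly from its closure under all $h^{L_{\bar\alpha}}_i$.
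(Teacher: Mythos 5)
Your argument is correct and is the standard one: the paper does not actually prove Fact~\ref{skolemlower}, but merely cites Devlin [II,~6], and your proof (identify the isomorphism with the Mostowski collapse, use transitivity of $M$ to see the collapse fixes $M$ pointwise, then transport Skolem terms via the uniform $\Sigma_1$-definition of the $h_i$) is exactly the argument found there. The only points left implicit --- that the hull is $\Sigma_1$-elementary in $L_{\bar\alpha}$ and that tuples from $M\cup\{\bar x\}$ can be fed to the unary Skolem functions --- are standard bookkeeping that the paper itself also takes for granted.
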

\begin{fct} \label{skolemupper}
If $L_\alpha= h^{L_{\alpha}}_{\Sigma_1}(M)$ and $\sigma: L_\alpha \rightarrow_{\Sigma_1} L_\beta$, then $\ran(\sigma)=  h^{L_{\beta}}_{\Sigma_1}(\sigma[M])$.
\end{fct}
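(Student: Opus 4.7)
The plan is to reduce both inclusions to a single commutation identity
\[
\sigma\bigl(h^{L_\alpha}_i(x)\bigr) = h^{L_\beta}_i\bigl(\sigma(x)\bigr)
\qquad \text{for every } x \in L_\alpha \text{ and } i \in \omega.
\]
Granted this identity, the equality $\ran(\sigma) = h^{L_\beta}_{\Sigma_1}(\sigma[M])$ falls out. For left-to-right: any $a \in L_\alpha$ is of the form $h^{L_\alpha}_i(m)$ with $m \in M$ by the hypothesis $L_\alpha = h^{L_\alpha}_{\Sigma_1}(M)$, so the identity rewrites $\sigma(a)$ as $h^{L_\beta}_i(\sigma(m)) \in h^{L_\beta}_{\Sigma_1}(\sigma[M])$. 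For right-to-left: an arbitrary element of $h^{L_\beta}_{\Sigma_1}(\sigma[M])$ has the form $h^{L_\beta}_i(\sigma(m))$, which the identity displays as $\sigma(h^{L_\alpha}_i(m)) \in \ran(\sigma)$.

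To prove the commutation I would exploit the uniform $\Sigma_1$-definability noted in the text: fix the $\Delta_0$ formula $\Phi$ with $h^{L_\gamma}_i(x) = y \iff L_\gamma \vDash \exists z\, \Phi(i,x,y,z)$. If $y := h^{L_\alpha}_i(x)$ is the ``genuine'' value, triggered by $L_\alpha \vDash \exists v\,\exists w\, \phi_i(v,w,x)$, then $L_\alpha \vDash \exists z\, \Phi(i,x,y,z)$, and $\Sigma_1$-elementarity of $\sigma$ transfers this to $L_\beta \vDash \exists z\, \Phi(i,\sigma(x),\sigma(y),z)$; since $h^{L_\beta}_i$ is a function this forces $h^{L_\beta}_i(\sigma(x)) = \sigma(y)$.

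The one place requiring care is the default case $h^{L_\alpha}_i(x) = \emptyset$, arising when $L_\alpha \nvDash \exists v\, \exists w\, \phi_i(v,w,x)$. Here I would use that $\sigma : L_\alpha \to_{\Sigma_1} L_\beta$ preserves $\Sigma_1$-formulas \emph{in both directions}, so $L_\beta \nvDash \exists v\, \exists w\, \phi_i(v,w,\sigma(x))$ as well, whence $h^{L_\beta}_i(\sigma(x)) = \emptyset = \sigma(\emptyset)$ and the identity still holds. I expect this mild partiality of the Skolem functions to be the only technical nuisance in the argument; once it is handled the proof is entirely formal.
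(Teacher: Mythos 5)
Your proof is correct: the paper offers no argument for this fact, deferring to Devlin~[II,\,6], and your commutation identity $\sigma(h^{L_\alpha}_i(x)) = h^{L_\beta}_i(\sigma(x))$, derived from the uniform $\Sigma_1$-definition of the Skolem functions, is exactly the standard argument given there. Your separate treatment of the default case $h^{L_\alpha}_i(x)=\emptyset$ via downward $\Sigma_1$-preservation is the right point to flag, since the totalized Skolem function's default branch is not itself captured by the $\Sigma_1$ graph.
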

For details, see \cite[II, 6]{devlin}).

\section{Lightface $\Sigma^1_2$-indescribable cardinals}\label{indescribable}

\begin{dfn}\label{thedefinition}
We say that a cardinal $\kappa$ has the $\Sigma^1_2$ reflection property if whenever 
\[
V_\kappa \vDash \exists X \forall Y \Phi(X,Y,p)
\] where $\Phi$ is first-order in the language of set-theory with $X$ and $Y$ as additional predicates, and $p \in V_\kappa$, then there is $\xi < \kappa$ such that $V_\xi \vDash \exists X \forall Y \Phi(X,Y,p)$.
We say $\kappa$ is (lightface) $\Sigma^1_2$-indescribable if in addition $\kappa$ is inaccessible.
\end{dfn}

\begin{fct}\label{def2}
$\kappa$ is lightface $\Sigma^1_2$-indescribable $\iff$ $\kappa$ is inaccessible and $H_\kappa \prec_{\Sigma_2} H_{\kappa^+}$.
\end{fct}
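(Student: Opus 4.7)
The plan is to combine two standard ingredients: (a) the fact $H_\mu \prec_{\Sigma_1} H_\nu$ for $\mu \le \nu$ uncountable regular cardinals (shown by collapsing a L\"owenheim--Skolem hull of $\operatorname{trcl}(\{p\})$ in $H_\nu$ of size $<\mu$), and (b) the coding of elements of $H_{\kappa^+}$ by subsets of $V_\kappa$ via Mostowski collapse of well-founded extensional binary relations on $V_\kappa$. Under (b), a first-order $\Sigma_2$-sentence $\phi(p) = \exists x\,\forall y\,\chi(x,y,p)$ over $H_{\kappa^+}$ (with $p \in H_\kappa$, $\chi \in \Delta_0$) translates uniformly into a $\Sigma^1_2$-sentence $\tilde\phi(p)$ over $V_\kappa$; the same translation applied at any $\xi<\kappa$ makes $V_\xi \vDash \tilde\phi(p)$ equivalent to $H_{|V_\xi|^+} \vDash \phi(p)$.

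For $(\Leftarrow)$, given $V_\kappa \vDash \exists X\,\forall Y\,\Phi(X,Y,p)$, I would consider the set-theoretic sentence
\[
\theta(p) := \exists \xi\,\exists W\,\bigl(W = V_\xi \wedge W \vDash \exists X\,\forall Y\,\Phi(X,Y,p)\bigr),
\]
unpacking the second-order satisfaction as $\exists X \subseteq W\,\forall Y \subseteq W\,\Phi^W(X,Y,p)$. Using the $\Pi_1$-definition of $V_\xi$, a routine complexity count shows $\theta$ is $\Sigma_2$ in the language of set theory with $p \in H_\kappa$ as its only parameter. Since $H_{\kappa^+} \vDash \theta(p)$ via $\xi = \kappa$, $W = V_\kappa$, the assumed $\Sigma_2$-elementarity pushes $\theta(p)$ down to $H_\kappa$, yielding $\xi<\kappa$ with $V_\xi \vDash \exists X\,\forall Y\,\Phi(X,Y,p)$.

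For $(\Rightarrow)$, fix a $\Sigma_2$-sentence $\phi(p) = \exists x\,\forall y\,\chi(x,y,p)$ with $p \in H_\kappa$. Upward ($H_\kappa \vDash \phi \Rightarrow H_{\kappa^+} \vDash \phi$): a witness $a \in H_\kappa$ makes $H_\kappa \vDash \forall y\,\chi(a,y,p)$ true, and this $\Pi_1$-statement is preserved by $H_\kappa \prec_{\Sigma_1} H_{\kappa^+}$, so $a$ still witnesses in $H_{\kappa^+}$. Downward ($H_{\kappa^+} \vDash \phi \Rightarrow H_\kappa \vDash \phi$): translating via (b) yields $V_\kappa \vDash \tilde\phi(p)$; $\Sigma^1_2$-reflection then supplies $\xi<\kappa$ with $V_\xi \vDash \tilde\phi(p)$, which translates back to $H_\mu \vDash \phi(p)$ for $\mu := |V_\xi|^+ \le \kappa$. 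The witness $a \in H_\mu$ satisfies $H_\mu \vDash \forall y\,\chi(a,y,p)$, which lifts to $H_\kappa$ by $H_\mu \prec_{\Sigma_1} H_\kappa$, giving $H_\kappa \vDash \phi(p)$.

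The main technical obstacle is the coding-translation (b): verifying that the coding of hereditarily $\kappa$-small sets by subsets of $V_\kappa$ keeps $\tilde\phi$ exactly at $\Sigma^1_2$ and that the correspondence is uniform enough in $\kappa$ that applying it to $V_\xi$ really captures first-order satisfaction in $H_{|V_\xi|^+}$. Everything else is a routine complexity count and the familiar Skolem-collapse argument behind (a).
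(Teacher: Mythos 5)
Your proposal is correct in outline and follows essentially the same route as the paper: the $(\Leftarrow)$ direction is the paper's argument almost verbatim (reflect the $\Sigma_2$ sentence asserting the existence of a $V_\theta$ with $\exists X\subseteq V_\theta\,\forall Y\subseteq V_\theta\,\Phi$, using that ``$z=V_\theta$'' is $\Pi_1$), and the $(\Rightarrow)$ direction likewise recodes the $\Sigma_2$ statement over $H_{\kappa^+}$ as a $\Sigma^1_2$ statement over $V_\kappa$, reflects it, and lifts the result back up via $\Sigma_1$-elementarity along the $H$-hierarchy; the coding details you flag as the main obstacle are left equally implicit in the paper. The one point where you overstate things is the claim that the translation yields an \emph{equivalence} $V_\xi\vDash\tilde\phi(p)\iff H_{\card{V_\xi}^+}\vDash\phi(p)$ for \emph{every} $\xi<\kappa$: to keep $\tilde\phi$ at $\Sigma^1_2$ one is forced to express well-foundedness of the universally quantified code $Y$ negatively, via the absence of a descending $\omega$-sequence lying in $V_\xi$, and this captures genuine well-foundedness only when $V_\xi$ is closed under $\omega$-sequences (it fails, e.g., for $\xi$ of countable cofinality). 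Only the implication $V_\xi\vDash\tilde\phi(p)\Rightarrow H_{\card{V_\xi}^+}\vDash\phi(p)$ is actually needed, and it survives provided well-foundedness of the \emph{existential} witness $X$ is stated $\Pi^1_1$-positively (which still leaves the prefix at $\Sigma^1_2$); alternatively one can do what the paper does, namely conjoin the $\Pi^1_1$ assertion of inaccessibility and reflect to an inaccessible $\xi<\kappa$, where $V_\xi=H_\xi$ and the correspondence with $H_{\xi^+}$ is clean.
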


\begin{proof}
First assume indescribability.
Let $H_{\kappa^+}\vDash \exists x \forall y \phi(x,y,p)$, where $p \in H_\kappa$.
Pick a witness $x_0$ in $H_{\kappa^+}$.
For any transitive $M \in H_\kappa$ containing $x_0$ and $p$, we have $M\vDash \forall y \phi(x_0,y,p)$.
So $H_{\kappa^+}\vDash \exists x \forall y \phi(x,y,p)$ is equivalent to a  $\Sigma^1_2$ assertion over $H_\kappa$, and thus is reflected by some $H_\xi$, for inaccessible $\xi < \kappa$.
Thus $H_{\xi^+}\vDash \exists x \forall y \phi(x,y,p)$, and as $\Sigma_2$ formulas are upward absolute for members of the $H$-hierarchy, 
\[
H_{\kappa}\vDash \exists x \forall y \phi(x,y,p).
\]
For the other direction, let $H_\kappa\vDash\exists X \forall Y \phi(X,Y,p)$.
Then $H_{\kappa^+}$ thinks ``there is an ordinal $\theta$ such that $\exists x \subseteq V_\theta \forall y \subseteq V_\theta \langle V_\theta,x,y \rangle \vDash \phi(x,y,p)$''.
As ``$z=V_\theta$'' is $\Pi_1$ in $z$ and $\theta$, this is seen to be a $\Sigma_2$ statement in the parameter $p$ and so holds in $H_\kappa$.

\end{proof}

\begin{fct}\label{fct:indescribable}
\begin{enumerate}
\item If $\kappa$ has the $\Sigma^1_2$ reflection property, it is a limit cardinal and is not equal to $2^\lambda$ for any $\lambda < \kappa$.
\item \label{Mahlo}There is a stationary set of $\Sigma^1_2$-indescribable cardinals below any Mahlo cardinal.
The least Mahlo is not $\Sigma^1_2$-indescribable.
\item Reflecting implies $\Sigma^1_2$-indescribable which in turn implies the existence of many inaccessibles.
\item If $P$ is a partial ordering
of size less than $\kappa$, then forcing with $P$ preserves the $\Sigma^1_2$-indescribability of $\kappa$.
\end{enumerate}
\end{fct}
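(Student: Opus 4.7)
My plan is to handle each of the four parts separately, in each case either applying Definition~\ref{thedefinition} with a judiciously chosen $\Sigma^1_2$ formula or invoking the characterization in Fact~\ref{def2}. For \textbf{Part~(1)}, I would exhibit, for each shape of $\kappa$ to be excluded, a $\Sigma^1_2$ statement holding at $V_\kappa$ but failing at every $V_\xi$ with $\xi < \kappa$ containing the relevant parameter, contradicting the reflection property. If $\kappa = \mu^+$, the natural choice is the $\Pi^1_1$ statement ``every cofinal $Y \subseteq \mathrm{Ord}$ has $|Y| > \mu$'' (with $\mu$ as parameter, $|\cdot|$ interpreted second-order). This holds at $V_{\mu^+}$ since $\mathrm{cf}(\mu^+) = \mu^+ > \mu$, but for any $\xi$ with $\mu < \xi < \mu^+$ either $\xi$ is a limit with $|\xi| = \mu$ (hence cofinality $\le \mu$), or $\xi = \beta+1$ is a successor (with $\{\beta\}$ as a singleton cofinal set); in both cases $V_\xi$ sees a cofinal subset together with an injection into $\mu$ as second-order objects. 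If $\kappa = 2^\lambda$ with $\lambda < \kappa$, the statement ``there is $X$ bijecting $\mathcal{P}(\lambda)$ with $\mathrm{Ord}$'' (parameter $\lambda$) holds at $V_{2^\lambda}$ as a subset of $\mathcal{P}(\lambda) \times \mathrm{Ord} \subseteq V_\kappa$ but fails at $V_\xi$ for $\lambda + 2 \le \xi < \kappa$ by cardinality. The main obstacle is designing the first formula to handle successor and limit $\xi$ uniformly.

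For \textbf{Part~(2)}, given Mahlo $\kappa$, the set $\{\xi < \kappa : V_\xi \prec V_\kappa\}$ is a club (via the clubs $\{V_\xi \prec_n V_\kappa\}$ for each $n$ and $\mathrm{cf}(\kappa) > \omega$), and by Mahlo-ness it intersects the stationary set of inaccessibles in a stationary set $S$. For $\xi \in S$ and $V_\xi \vDash \exists X \forall Y\,\Phi(X, Y, p)$, the (first-order) assertion ``$\exists \eta\,(V_\eta \vDash \exists X \forall Y\,\Phi[p])$'' holds in $V_\kappa$ (witness $\eta = \xi$); by elementarity it reflects into $V_\xi$, producing $\eta < \xi$ with $V_\eta$ satisfying the original statement (using absoluteness of satisfaction for $V_\eta \in V_\xi$). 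Thus each $\xi \in S$ is $\Sigma^1_2$-indescribable. For the ``least Mahlo'' claim, apply Fact~\ref{def2}: if the least Mahlo $\kappa$ were $\Sigma^1_2$-indescribable, then $H_\kappa \prec_{\Sigma_2} H_{\kappa^+}$, and since ``$\exists \xi\,\xi$ is Mahlo'' is $\Sigma_2$ and witnessed in $H_{\kappa^+}$ by $\kappa$, it would reflect to $H_\kappa$, producing a Mahlo below $\kappa$---contradiction.

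For \textbf{Part~(3)}, reflecting means $V_\kappa \prec_{\Sigma_2} V$: a $\Sigma^1_2$ assertion over $V_\kappa$ corresponds to the $\Sigma_2$ statement ``$\exists \eta\,(V_\eta \vDash \exists X \forall Y\,\Phi[p])$'' witnessed in $V$ by $\kappa$, and $\Sigma_2$-elementarity gives $\eta < \kappa$ in $V_\kappa$. To see $\Sigma^1_2$-indescribable implies many inaccessibles, I would express ``$\eta$ inaccessible'' as a $\Pi^1_1$ statement over $V_\eta$ (second-order replacement, which fails for singular $\eta$ because a cofinal map witnesses failure as a second-order object, combined with the first-order strong-limit axiom). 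Reflecting ``I am inaccessible and contain $\alpha$'' (parameter $\alpha < \kappa$) then yields inaccessibles cofinal in $\kappa$.

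For \textbf{Part~(4)}, fix $P \in V_\kappa$ with $|P| < \kappa$ and $V$-generic $G$; preservation of inaccessibility is standard. For $\Sigma^1_2$ reflection in $V[G]$, given $V[G]_\kappa \vDash \exists X \forall Y\,\Phi(X, Y, p)$, choose a nice name $\dot p \in V_\kappa$ for $p$ and $q \in G$ forcing the statement. Since nice names for subsets of $V_\kappa[G]$ can be coded as subsets of $V_\kappa \times P \subseteq V_\kappa$, the assertion ``$\exists \dot X \forall \dot Y\,(q \Vdash \Phi(\dot X, \dot Y, \dot p))$'', conjoined with ``I am inaccessible'' from Part~(3) to ensure the reflected $\xi$ is inaccessible, is $\Sigma^1_2$ over $V_\kappa$ with parameters $P, q, \dot p$. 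Applying reflection in $V$ yields inaccessible $\xi < \kappa$ satisfying it; since $G$ is also $V_\xi$-generic and $V[G]_\xi = V_\xi[G]$, we conclude $V[G]_\xi \vDash \exists X \forall Y\,\Phi(X, Y, p)$.
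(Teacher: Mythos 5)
Your four parts are all correct, but in two of them you take a genuinely different route from the paper. For part (1) and the first half of part (3) you do essentially what the paper does (the paper's witness for $\kappa=\lambda^+$ is the $\Pi^1_1$ sentence ``there is no function from $\lambda$ onto the ordinals,'' a cosmetic variant of yours; both versions share the same harmless wrinkle about coding a function as a second-order object over a successor-rank $V_\xi$, which is most cleanly dispatched by conjoining the first-order clause ``I am a limit ordinal''). For part (2) the paper does not use the club $\{\xi: V_\xi\prec V_\kappa\}$; instead it defines the function sending $\eta<\kappa$ to the least $\xi$ past which every $\Sigma^1_2$ sentence with parameter in $V_\eta$ that holds in some $V_\alpha$, $\alpha<\kappa$, already holds in some $V_\alpha$, $\alpha<\xi$, and takes closure points. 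That argument is lighter (it only needs definability of $\Sigma^1_2$-satisfaction and Replacement in $V_\kappa$, not the iterated $\Sigma_n$-reflection plus $\mathrm{cf}(\kappa)>\omega$ needed for full elementarity), whereas your club buys a cleaner one-line verification of the reflection property at each $\xi$. For the least Mahlo, the paper simply observes that Mahloness of $\kappa$ is $\Pi^1_1$ over $V_\kappa$ and reflects it directly; your detour through Fact~\ref{def2} and the $\Sigma_2$-ness of ``there exists a Mahlo cardinal'' over $H_{\kappa^+}$ is correct but requires the extra check that $H_{\kappa^+}$ and $H_\kappa$ compute Mahloness correctly. For part (4) the paper again routes everything through Fact~\ref{def2}: it shows $H_\kappa\prec_{\Sigma_2}H_{\kappa^+}$ is preserved, using that every element of $(H_{\kappa^+})^{V^P}$ has a name in $H_{\kappa^+}$ and that the forcing relation for $\Delta_0$ formulas is uniformly $\Delta_1$; you instead reflect the second-order statement ``$\exists\dot X\,\forall\dot Y\,(q\Vdash\Phi)$'' over $V_\kappa$ directly, using nice names as second-order objects. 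Your version works but silently relies on the uniform definability of the forcing relation for formulas with second-order predicates across all inaccessible $V_\xi$, and on the surjectivity of the nice-name evaluation onto $\mathcal P(V_\xi[G])$; the paper's first-order reformulation avoids having to say anything about second-order forcing.
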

\begin{proof}
\begin{enumerate}
\item If $\kappa = \lambda^+$, the $\Pi^1_1$ sentence (with $\lambda$ as a parameter) ``there is no function from $\lambda$ onto the ordinals'' holds in $V_\kappa$. But this sentence can't hold in any $V_\xi$ containing $\lambda$, $\xi < \kappa$.

If $\kappa = 2^\delta$, look at the sentence ``there is a bijection between $\mathcal P (\delta)$ and the ordinals''. Argue as above.
\item Let $\kappa$ be Mahlo. Consider the function that assigns to each ordinal $\eta < \kappa$ the least $\xi$ such that: if $\Phi$ is $\Sigma^1_2$ with a parameter from $V_\eta$ and there is $\alpha < \kappa$ such that $V_\alpha\vDash \Phi$, then there is $\alpha < \xi$ such that $V_\alpha\vDash \Phi$.
Any closure point under this function has the $\Sigma^1_2$ reflection property, and by Replacement in $V_\kappa$, the closure points under this function form a $cub$ subset of $\kappa$.
This prooves the first assertion.
To see that the least Mahlo cannot have the reflection property, observe that $\kappa$ being Mahlo is expressible by a $\Pi^1_1$ statement over $V_\kappa$.
\item The first assertion follows from the previous Fact, as $\Sigma_2$ sentences are upward absolute for members of the $H$-hierarchy.
Secondly, being inaccessible is expressible as a $\Pi^1_1$ statement (for example
the power set of any set exists and can be mapped injectively into some ordinal and there is no function with a set as domain but unbounded range).
\item\label{forcing} By Fact \ref{def2}, it suffices to proove $H_\kappa \prec_{\Sigma_2} H_{\kappa^+}$ holds in the extension, assuming it holds in the ground model. Observe that for a partial order $P$ and a regular $\alpha$ such that $P\in H_\alpha$, every element of $(H_\alpha)^{V^P}$  has a $P$-name in $H_\alpha$ (straightforwardly check, constructing names by induction on the rank; in fact, it suffices to assume that $P$ is a subset of $H_\kappa$ and has the $\kappa$-cc).
Recall that for any given $\Delta_0$ formula $\phi$ and for any transitive $ZF^-$-model $M$ such that $P \in M$, the forcing relation for $\phi$ on $P\times (P$-names in $M)$ is uniformly $\Delta_1$-definable over $M$ (i.e. the definition is the same for all such $M$).  
Thus 
$\Vdash_P$``$H_{\kappa^+} \vDash \exists x \forall y \phi(y,y,\dot{p})$'' is equivalent to a statement of the form
\[ 
\exists \dot{x} \in H_{\kappa^+} \quad \forall \dot{y} \in H_{\kappa^+} \quad H_{\kappa^+} \vDash \phi'(\dot{x},\dot{y},\dot{p},P),
\]
where $\phi'$ is $\Delta_1$.
As $H_\kappa \prec_{\Sigma_2} H_{\kappa^+}$, the above holds with $\kappa^+$ replaced by $\kappa$. As $P \in H_\kappa$, $\Vdash_P$``$H_\kappa \vDash \exists x \forall y \Phi(x,y,\dot{p})$''.
\end{enumerate}

\end{proof}

Lightface $\Sigma^1_2$-indescribability does imply a certain fragment of Mahloness, as we observed in \cite{thesis}.
For an application of the following notion see \cite[\S5]{bagaria_solovay_models}.
\begin{dfn}
Let us call a cardinal $\kappa$ $\Sigma_n$-Mahlo (resp. $\Pi_n$-Mahlo) \textit{iff} it is inaccessible and every $cub$ subset $C$ of $\kappa$ with a $\Sigma_n$ (resp.
$\Pi_n$) definition in $H_\kappa$, with parameters, contains an inaccessible cardinal. Lightface $\Sigma_n$-Mahlo (resp. $\Pi_n$-Mahlo) is defined analogously, but without allowing parameters in the definition of $C$.
\end{dfn}

\begin{fct}
If $\kappa$ is lightface $\Sigma^1_2$-indescribable, then it is $\Sigma_2$-Mahlo. 
In fact, $\kappa$ is an inaccessible limit of $\Sigma_2$-Mahlo cardinals. Any lightface $\Pi_2$-Mahlo cardinal is an inaccessible limit of $\Sigma^1_2$-indescribable cardinals.
\end{fct}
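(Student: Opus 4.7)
For the first claim, let $C \subseteq \kappa$ be defined by a $\Sigma_2$ formula $\phi(\xi, p)$ with $p \in H_\kappa$. I would apply the $\Sigma^1_2$-reflection property of Definition \ref{thedefinition} to the sentence $\Theta(p) := $ ``I am inaccessible, and $\{\zeta : \phi(\zeta, p)\}$ is unbounded in my ordinals''. This is first-order with $p$ as first-order parameter, hence $\Sigma^1_2$ in the paper's sense, and true in $V_\kappa$ by inaccessibility of $\kappa$ and cofinality of $C$. Reflection delivers $\xi < \kappa$ with $V_\xi \vDash \Theta(p)$, so $\xi$ is inaccessible and $\{\zeta < \xi : V_\xi \vDash \phi(\zeta, p)\}$ is unbounded in $\xi$. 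Upward $\Sigma_2$-absoluteness in the $H$-hierarchy makes $C \cap \xi$ unbounded in $\xi$, and closure of $C$ places $\xi \in C$.

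For the second claim, I iterate: fix $\alpha < \kappa$ and use a universal $\Sigma_2$-satisfaction predicate (itself $\Sigma_2$-definable) to write ``I am $\Sigma_2$-Mahlo'' as a single first-order sentence. Reflecting ``$\alpha$ is in me, I am inaccessible, and I am $\Sigma_2$-Mahlo''---true in $V_\kappa$ by the first claim---yields $\xi \in (\alpha, \kappa)$ that is inaccessible and internally $\Sigma_2$-Mahlo. Inaccessibility of $\xi$ together with transitivity of $V_\xi$ make the relevant notions (cub-ness of subsets of $\xi$ and inaccessibility of $\zeta < \xi$) absolute between $V_\xi$ and $V$, so internal $\Sigma_2$-Mahloness coincides with the external notion.

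For the third claim, let $\kappa$ be lightface $\Pi_2$-Mahlo and put $C := \{\xi < \kappa : H_\xi \prec_{\Sigma_2} H_\kappa\}$. The main computational hurdle is to show $C$ is $\Pi_2$-definable without parameters in $H_\kappa$. Using upward $\Sigma_2$-absoluteness the upward half of elementarity is automatic, leaving only $H_\kappa \vDash \psi(q) \to H_\xi \vDash \psi(q)$, of shape $\Sigma_2 \to \Delta_1 = \Pi_2$ (satisfaction in the set-sized $H_\xi$ being $\Delta_1$ in $H_\kappa$); bounded universal quantification over the formula $\psi$ and $q \in H_\xi$ preserves $\Pi_2$. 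A standard iterated Skolem-hull argument (using inaccessibility of $\kappa$) together with the Tarski--Vaught test shows $C$ is cub. For any inaccessible $\xi \in C$, interpolating through $H_{\xi^+}$---which lies $\Sigma_2$-between $H_\xi$ and $H_\kappa$ by upward absoluteness---gives $H_\xi \prec_{\Sigma_2} H_{\xi^+}$, making $\xi$ lightface $\Sigma^1_2$-indescribable.

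To get cofinally many, I would argue by contradiction. Were the $\Sigma^1_2$-indescribable cardinals below $\kappa$ bounded by $\beta < \kappa$, note that ``$\xi$ is lightface $\Sigma^1_2$-indescribable'' is $\Pi_1$ in $H_\kappa$ (inaccessibility is $\Pi_1$, and ``$H_\xi \prec_{\Sigma_2} H_{\xi^+}$'' is $\Delta_1$, since both structures are sets in $H_\kappa$), so the tail $T := \{\xi : \forall \eta \geq \xi, \eta \text{ is not lightface } \Sigma^1_2\text{-indescribable}\}$ is a $\Pi_2$-definable cub without parameters. Lightface $\Pi_2$-Mahloness applied to the $\Pi_2$-definable cub $C \cap T$ then yields an inaccessible $\xi$ in it, simultaneously lightface $\Sigma^1_2$-indescribable (from $C$) and not (from $T$), a contradiction.
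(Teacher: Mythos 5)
Your argument is correct in substance and reaches all three conclusions, but it consistently works on the other side of the equivalence in Fact \ref{def2}: where the paper proves the first claim by noting that ``there is an inaccessible $\theta$ such that $H_\theta\vDash\forall\xi\exists\bar\xi>\xi\,\phi(\bar\xi)$'' is a $\Sigma_2$ sentence true in $H_{\kappa^+}$ and hence in $H_\kappa$, you reflect the corresponding second-order sentence over $V_\kappa$; and for the third claim, where the paper uses the $\Pi_2$-definable cub of $\xi<\kappa$ having the $\Sigma^1_2$ reflection property (as in Fact \ref{fct:indescribable}(\ref{Mahlo})), you use $\{\xi:H_\xi\prec_{\Sigma_2}H_\kappa\}$ and recover indescribability of its inaccessible members by interpolating through $H_{\xi^+}$ --- a clean alternative, and your explicit tail-set argument for unboundedness fills in a step the paper leaves implicit. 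Two complexity labels do need repair, though neither is fatal. First, ``I am inaccessible'' (and hence your $\Theta(p)$ and your rendering of ``I am $\Sigma_2$-Mahlo'') is \emph{not} first-order over $V_\kappa$: regularity of $\kappa$ quantifies over cofinal functions that are subsets but not elements of $V_\kappa$, so these sentences are $\Pi^1_1$; they are still reflected, since a $\Pi^1_1$ sentence is $\Sigma^1_2$ with a dummy existential second-order quantifier, which is exactly how the paper phrases the matter in Fact \ref{fct:indescribable}. Second, ``$\eta$ is lightface $\Sigma^1_2$-indescribable'' is not $\Pi_1$ over $H_\kappa$ but only $\Delta_2$ (already ``$\eta$ is a strong limit'' needs an existential quantifier producing $V_{\eta+2}$ or injections of power sets into ordinals); since its negation is then also $\Delta_2$, your tail set remains $\Pi_2$-definable without parameters and the contradiction goes through unchanged.
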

\begin{proof}
First, assume $\kappa$ is $\Sigma^1_2$-indescribable.  To proove that $\kappa$ is $\Sigma_2$-Mahlo, let $C$ be a $cub$ subset of $\kappa$ such that $x \in C \iff H_\kappa\vDash \phi(\xi)$, where $\phi(\xi)$ is $\Sigma_2$.
$H_{\kappa^+}\vDash$``there is an inaccessible $\theta$ such that
$H_\theta\vDash\forall \xi \exists \bar\xi>\xi \phi(\bar\xi)$''.
This statement is itself $\Sigma_2$, so it  also holds in  $H_\kappa$.
So there is an inaccessible $\theta < \kappa$ such that
$H_\theta \vDash \forall \xi \exists \bar \xi > \xi \phi(\bar\xi)$.
By upward absoluteness of $\phi(\xi)$ for members of the $H$-hierarchy and closedness of $C$, $\theta \in C$. This completes the proof of the first assertion.
It is straightforward to check that ``$\kappa$ is $\Sigma_2$-Mahlo'' is $\Pi^1_1$ over $V_\kappa$, so since $\kappa$ is $\Sigma^1_2$-indescribable, there are unboundedly many $\Sigma_2$-Mahlo cardinals below $\kappa$.
For the last assertion, assume $\kappa$ is lightface $\Pi_2$-Mahlo.
We follow the proof of Fact \ref{fct:indescribable}, (\ref{Mahlo}).
Check that the $cub$ set mentioned there, consisting of $\xi < \kappa$ having the $\Sigma^1_2$ reflection property, has a $\Pi_2$ definition, without parameters, over $V_\kappa$.
So there are unboundedly many $\Sigma^1_2$-indescribable cardinals below $\kappa$.
\end{proof}

\section{Coding using an Aronszajn-tree}\label{coding}

We fix the following notation: for a tree $T$, we denote by $<_T$ (or $\leq_T$) the tree order, $T_\alpha$ denotes the $\alpha$-th level of $T$ and $T\upharpoonright \alpha$ denotes the subtree of $T$ consisting of all levels of height less than $\alpha$.
By $\pred(t)$ we mean of course $\{ t' \in T | t' <_T t \}$.
The following works for any Aronszajn-tree $T$, that is, a tree of height $\omega_1$ with countable levels and without any cofinal branches (i.e.
linearly ordered sets of type $\omega_1$).
 Aronszajn trees can be ``specialized'' by a $\ccc$ forcing: that is, one adds an order preserving function from the tree into the rationals (a so-called specializing function). This ensures that one cannot add, by further forcing, cofinal branches without at the same time collapsing $\omega_1$.
Applying this forcing to code a subset of $\omega_1$ by a real, \cite{equicon_results} proves that $MA$ together with ``$\omega_1$ is inaccessible to reals'' implies $\omega_1$ is weakly compact in $L$.
We present a slight variation.

\begin{fct}\label{fct:ccc-resh}
Let $S=(s_\alpha)_{\alpha < \omega_1}$ be a sequence of reals.
There is a $\ccc$ forcing $P$ that adds a real $r$ such that in the extension the following holds: whenever $M$ is a transitive model of $ZF^-$ such that
$r \in M$, $ \langle T,\leq_T \rangle \in M$, we have $(s_\alpha)_{\alpha < \omega_1} \in M$.

\end{fct}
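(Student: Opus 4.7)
I would adapt the coding strategy of \cite[Theorem~C]{equicon_results} to the Aronszajn-tree setting, in three stages. First, reduce the problem to coding a single set $A \subseteq \omega_1$: fix a canonical bijection $\omega_1 \to \omega_1 \times \omega$ and a canonical identification of reals with subsets of $\omega$, so that $S$ is recovered from $A$. Second, define the forcing $P$: a condition is a pair $(f, X)$ where $f$ is a finite order-preserving partial function $T \to \mathbb{Q}$ (as in the standard specializing forcing for $T$) and $X$ is a finite set of ``promises'' of the form $(\alpha, \varepsilon)$ with $\alpha < \omega_1$ and $\varepsilon \in \{0,1\}$. A promise $(\alpha, \varepsilon)$ commits the generic specializing function to realize one of two mutually exclusive combinatorial patterns on a canonical countable subset $C_\alpha \subseteq T$ associated to $\alpha$ and extractable from $T$ alone (for instance, a canonical branch through $T\upharpoonright(\alpha+1)$ singled out by the underlying well-ordering of the carrier of $T$). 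The forcing is further arranged so that only the promise $(\alpha, \chi_A(\alpha))$ is consistent with extensions of $(f, X)$, making the coding genuinely reflect $A$. The real $r$ is defined as a canonical countable trace of the generic, chosen so that for every $\alpha$ the pattern realized on $C_\alpha$ is first-order definable from $r$, $T$, and $\alpha$.

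Third, prove $P$ is ccc by the familiar $\Delta$-system argument for specializing forcings applied to the $f$-parts: only finitely many order-types of $f$ occur on a fixed finite domain, so any uncountable family contains two compatible conditions after refining to a $\Delta$-root; the finite promise parts then merge without obstruction. For the decoding step, let $M$ be a transitive $ZF^-$ model in $V^P$ with $r, T \in M$. Since $T \in M$ has height $\omega_1^{V^P}$ and $M$ is transitive, $\omega_1^{V^P} \subseteq M$. For each $\alpha < \omega_1^{V^P}$, $M$ uses $T$ to form $C_\alpha$, reads from $r$ which of the two patterns is realized on $C_\alpha$, and thereby determines $\chi_A(\alpha)$. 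By Collection inside $M$ the function $\alpha \mapsto \chi_A(\alpha)$ belongs to $M$, so $A \in M$, and hence $S \in M$.

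The main obstacle is the design of the two patterns per level. They must be (i) mutually distinguishable by a uniform first-order formula in $(r, T, \alpha)$, so that the decoding works inside an arbitrary transitive $ZF^-$ model that has no access to $P$ or to the generic filter itself; (ii) complementary-dense in $P$, so that every $\alpha$ eventually receives its correct promise; and (iii) sparse enough that the specializing-forcing $\Delta$-system argument still yields ccc. The Aronszajn property of $T$ serves (iii): since every level of $T$ is countable, each $C_\alpha$ is countable and each promise touches only countably many nodes, which keeps the combinatorics of $P$ within reach of the standard argument.
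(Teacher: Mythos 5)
Your plan defers precisely the point on which the whole fact turns, and the deferred point is not a routine detail. The real $r$ is a countable object, yet your decoding step asks an arbitrary transitive $ZF^-$ model $M$ with $r,T\in M$ to ``read from $r$ which of the two patterns is realized on $C_\alpha$'' for every $\alpha<\omega_1$. No finite ``promise'' bookkeeping and no choice of ``canonical countable trace'' can make the pattern at level $\alpha$ outright definable from $(r,T,\alpha)$ uniformly for uncountably many $\alpha$: a real can only carry this much information if $M$ can reconstruct it by a level-by-level recursion, and your proposal contains no such recursion. In particular you never confront the successor-level problem: the value of a specializing function at a node of level $\zeta+1$ is in no way determined by its values on $T\upharpoonright(\zeta+1)$, so whatever pattern you realize at level $\zeta+1$ cannot be recovered from data already reconstructed at levels $\leq\zeta$. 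The paper's proof is organized entirely around solving these two problems. It codes $s_\alpha$ directly into level $\alpha$ of a generic specializing function $F_0\colon T\times\omega\to\mathbb{Q}$ by deciding whether $F_0(t,n)$ lands in $Q_0$ or $Q_1$ according to whether $n\in s_\alpha$; it arranges by density that $F_0$ is continuous at limit nodes, so limit-level values are computed as suprema of values below; and, to handle successors, it iterates $\omega$ times with finite support, letting the $(k+1)$-st specializing function code at its level $\alpha$ the entire restriction $F_k\upharpoonright(T\upharpoonright\alpha+2)\times\omega$ (a countable object, since levels of $T$ are countable). Then $r$ need only code the level-$0$ reals $s^k_0$ for all $k$, and $M$ recovers the sequence by recursion on $\eta\leq\omega_1$: at $\eta=\zeta+1$ it uses $s^{k+1}_\zeta$ to read off $F_k$ on level $\zeta+1$ and hence $s^k_{\zeta+1}$, and at limits it uses continuity. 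Without an analogue of this bootstrapping your construction cannot be completed.

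A secondary gap: your ccc argument is too quick. After the $\Delta$-system refinement and after fixing the values on the root, two conditions can still be incompatible, because nodes in their disjoint tails may be comparable in $\leq_T$ and violate order-preservation; the standard proof, and the paper's, must convert a putative uncountable antichain into an uncountable branch of $T$ (the paper does this with a uniform ultrafilter on $\omega_1$) and then invoke the Aronszajn property. ``Only finitely many order-types on a fixed finite domain'' does not by itself yield two compatible conditions.
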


\begin{proof}

To achieve this, we iterate the following notion of forcing: fix  $Q_0$, $Q_1$, two disjoint dense sets whose union is all rational numbers.
For any sequence $S=(s_\alpha)_{\alpha < \omega_1}$ consider $P^{S}_T$ consisting of all conditions $f$ such that

\begin{enumerate}
\item $f$ is a function with domain a finite subset of $T \times \omega$.
\item For each $n \in \omega$, the function $t \mapsto f(t,n)$ is a partial order preserving mapping from $(T,<_T)$ into the rationals. 
\item For any $\alpha < \omega_1$, $t$ at the $\alpha$-th level of $T$ and $n \in \omega$, if $(t,n) \in \dom (f)$, then $f(t,n) \in Q_0$ if and only if $n \in s_\alpha$.
\end{enumerate}

\begin{lem}\label{lem:ccc-resh:basic}
Let $F= \bigcup G$, where $G$ is generic.
Then $F$ is a function from $T \times \omega$ into the rationals which is order preserving and continuous at limit nodes of $T$; 
moreover, for any $\alpha < \omega_1$, and any $t \in T_\alpha$, $\{ \; n \in \omega \; \vert \; F(t,n) \in  Q_0 \; \}= s_\alpha$.
\end{lem}

\begin{proof}
Clearly, $D_{(t,n)}:=\{\; p \in  P^S_T \;\vert\; (t,n) \in \dom (p)\;\}$ is dense for any $(t,n) \in T \times \omega$: given a condition $p$, there is an interval of possible values for $p$ at $(t,n)$ (since $p$ has finite domain), so if $t$ is at level $\alpha$ of $T$, we can choose a value from $Q_0$ or $Q_1$, depending on whether $n \in s_\alpha$ or not.
So $F$ is a total, order preserving function on $T \times \omega$, and the ``moreover'' clause holds by definition.
$F$ is continuous as $D_{(t,n),\epsilon }:=\{ \; p \in  P^S_T \;\vert\; \exists t' \in T \; \lvert p(t',n)-p(t,n) \rvert < \epsilon \;\}$ is dense for any $n \in \omega$, $\epsilon > 0$ and $t$ at a limit level of $T$ (again, by the finiteness of the domain of any condition).
\end{proof}

\begin{lem}
$P^S_T$ is $\ccc$.
\end{lem}

\begin{proof}
Assume $(p_\alpha)_{\alpha < \omega_1}$ is an uncountable antichain; then $\{ \dom(p_\alpha) \vert \alpha < \omega_1 \}$ is an uncountable subset of $[ T \times \omega ]^{< \omega}$, so we can apply the delta-systems lemma and assume that for each $\alpha$, $\dom(p_\alpha)=r \cup d_\alpha$, where $r, (d_\alpha)_{\alpha<\omega_1}$ are pairwise disjoint.
Let us also assume that the $d_\alpha$ all have the same cardinality $k$.
There are only countably many possibilities for the values of the $p_\alpha$ on $r$, so we assume that all the conditions agree on $r$.
So for any $\alpha, \alpha' < \omega_1$, there is $t \in d_\alpha$, $t' \in d_{\alpha'}$ and $n \in \omega$ such that $p_\alpha \cup p_{\alpha'}$ is not order preserving on $\{(t,n), (t',n)\}$, whence in particular $t$ and $t'$ are comparable in the tree order.
As any node of the tree has only countably many predecessors in the tree order, by thinning out $(p_\alpha)_{\alpha < \omega_1}$ we can further assume that for all $\alpha < \alpha' < \omega_1$, there are $t \in  d_\alpha$, $t' \in d_{\alpha'}$ such that $t <_T t'$.
Let us now enumerate the $d_\alpha$ as $t^0_\alpha, \hdots, t^{k-1}_\alpha$.
We know that all the conditions in the antichain have comparable nodes in their domain, we will now find a sufficiently coherent subset of conditions to get a branch through $T$.
Enlarge (using Zorn's lemma) the filter of co-initial subsets of $\omega_1$ to an ultrafilter $U$ ($U$ contains only sets of size $\omega_1$, i.e.
$U$ is uniform).
For any $\alpha < \omega_1$, we have
$\{ \; \beta < \omega_1 \;\vert \; \exists i,j \;\; t^i_\alpha <_T t^j_\beta \; \} \in U$.
So by finite additivity of $U$, for each $\alpha$, there are $i,j$ such that $ \{ \; \beta < \omega_1 \;\vert \; \; t^i_\alpha <_T t^j_\beta \; \} \in U$.
Moreover, there is an uncountable set $I$ and $i,j$ such that the above holds for all $\alpha \in I $ and this particular pair $i,j$.
So for any $\alpha, \alpha' \in I$, as elements of $U$ have non-empty (in fact large) intersection, there is $\beta$ such that $t^i_\alpha <_T t^j_\beta$ and $t^i_{\alpha'} <_T t^j_\beta$, so  $t^i_\alpha$ and $t^i_{\alpha'}$ are comparable and $(t^i_\alpha)_{\alpha \in I}$ is an uncountable branch through $T$.

\end{proof}

Now we can prove Fact \ref{fct:ccc-resh}.
We build $P$ as the finite support iteration of $(P_k)_{k \in \omega}$.

Let $s^0_\alpha=s_\alpha$; $P_0$ is the forcing coding this sequence of reals into a specializing function for $T$.
At stage $n$, we have added a specializing function $F_n$; let $s^{n+1}_\alpha$ be a real coding (in some absolute way) $F_n$ restricted to $(T\upharpoonright \alpha+2) \times \omega$.
$P_{n+1}$ is the forcing for coding the sequence $(s^n_\alpha)_{\alpha< \omega_1}$.
 
Let $r$ be a real coding all reals $(s^k_0)_{k \in \omega}$; we check by induction on $\eta \leq \omega_1$ that $r$ has the  property promised in \ref{fct:ccc-resh}: assume that for all $k$, $(s^k_\xi)_{\xi < \eta} \in M$ (this holds by assumption if $\eta=1$).
If $\eta=\zeta+1$, as $s^{k+1}_\zeta$ codes $F_k$ restricted to $T \upharpoonright \zeta +2$, for an arbitrary $t \in T_{\zeta+1}$, $s^k_{\zeta+1}=\{ n | F_k(t,n)\in Q_0 \} \in M$.
For limit $\eta$, using $(s^k_\xi)_{k \in \omega, \xi < \eta}$ we have $F_k$ restricted to $T\upharpoonright \eta$ inside $M$, and therefore, picking an arbitrary $t \in T_\eta$, $n \in s^k_\eta$ exactly if $sup(\{ F(t',n) \vert t'<_T t\})\in Q_0$; so $s^k_\eta \in M$ for all $k$.

\end{proof}

\section{An equiconsistency}

\begin{thm}
``$\Sigma^1_3$-Abs$(\ccc)$ and $\omega_1$ inaccessible to reals'' has the consistency strength of a $\Sigma^1_2$-indescribable.
 
\end{thm}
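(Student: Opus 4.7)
This theorem is an equiconsistency between the stated hypothesis (``$\Sigma^1_3$-Abs$(\ccc)$ and $\omega_1$ inaccessible to reals'') and the existence of a lightface $\Sigma^1_2$-indescribable cardinal; we sketch both directions. For the upper bound, start with $V$ containing a lightface $\Sigma^1_2$-indescribable $\kappa$ and perform a finite-support ccc iteration of length $\kappa$ that alternates (a) collapses of type $\mathrm{Coll}(\omega,\omega_1)$ to successively collapse the current $\omega_1$, so that $\kappa=\omega_1$ at the end, and (b) bookkeeping steps forcing every ccc poset of size $<\kappa$ appearing in intermediate models. Fact \ref{fct:indescribable}(\ref{forcing}) preserves $\Sigma^1_2$-indescribability --- hence $H_\kappa\prec_{\Sigma_2}H_{\kappa^+}$ --- at every stage; since every real of $V[G]$ is added at some bounded stage and $\kappa$ was $V$-inaccessible, $\omega_1^{V[G]}=\kappa$ is inaccessible to reals. $\Sigma^1_3$-Abs$(\ccc)$ then follows via the usual pushdown: a $\Sigma_2$-over-$H_{\omega_1}$ sentence made true by further ccc forcing reflects, by elementarity, to a bounded stage of the iteration where the bookkeeping has already addressed it.

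For the lower bound --- the main direction --- assume the hypothesis and set $\kappa=\omega_1^V$; by ``$\omega_1$ inaccessible to reals'', $\kappa$ is inaccessible in every $L[r]$. Suppose toward contradiction $\kappa$ fails $\Sigma^1_2$-indescribability in every $L[r]$. Fix a real $r_0$; by Fact \ref{def2} there is a $\Sigma_2$-formula $\phi$ and a witness $x_0\in L_{\alpha_0}[r_0]$ for some $\alpha_0\in[\kappa,(\kappa^+)^{L[r_0]})$, while no witness exists in $L_\kappa[r_0]$; take $x_0$ to be the $\leq_{L[r_0]}$-least such. For each $\xi<\kappa$ let $L_{\bar\alpha_\xi}[r_0]$ be the transitive collapse of the Skolem hull $h^{L_{\alpha_0}[r_0]}_{\Sigma_1}(\xi\cup\{x_0\})$, and $\bar x_\xi$ the image of $x_0$; Facts \ref{skolemlower} and \ref{skolemupper} give fine-structural coherence so that the structures $L_{\bar\alpha_\xi}[r_0]$ form a tower with direct limit $L_{\alpha_0}[r_0]$, each level countable. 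Code $\langle L_{\bar\alpha_\xi}[r_0],\bar x_\xi\rangle$ by a real $s_\xi$ and apply Fact \ref{fct:ccc-resh} to $(s_\xi)_{\xi<\kappa}$ and an Aronszajn tree $T\in V$: we obtain a ccc forcing $P$ adding a real $r^*$ from which, together with $T$, the sequence is recoverable. The existence of such an $r^*$ is expressible as a $\Sigma^1_3$-statement with parameter $r_0$, true in $V^P$; by $\Sigma^1_3$-Abs$(\ccc)$ it is true in $V$ already. Decoding in $V$ and applying Fact \ref{skolemlower} to lift a $\bar x_\xi$ back through the collapse then produces a witness to $\phi$ in $L_\kappa[r_0]$, contradicting the choice of $x_0$.

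The central obstacle is designing the $\Sigma^1_3$-statement so that its truth in $V$ forces a witness inside $L_\kappa[r_0]$ rather than merely in some larger model. The fine-structural coherence provided by Facts \ref{skolemlower}, \ref{skolemupper} is essential here: it ensures that any countable decoded substructure embeds canonically back into $L_{\alpha_0}[r_0]$, with its image of $\bar x_\xi$ being a genuine $\Sigma_1$-definable element below $\kappa$. A secondary technical point is that the Aronszajn tree $T$ has size $\omega_1$ and so is not an $H_{\omega_1}$-parameter; the proof of Fact \ref{fct:ccc-resh} shows, however, that recovery of $s_\zeta$ needs only the countable initial segment $T\upharpoonright\zeta+2$, so one formulates the absolutized statement using countable fragments of $T$ (or existentially quantifies over them).
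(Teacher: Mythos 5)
Your lower-bound argument has a fatal gap: there is no mechanism that converts the pulled-back coding real into a witness inside $L_\kappa[r_0]$. The collapsed elements $\bar x_\xi$ witness the $\Sigma_2$ formula only in the countable collapsed structures $L_{\bar\alpha_\xi}[r_0]$; they are hereditarily countable, hence already present in $H_\kappa^{L[r_0]}$, and their existence is perfectly consistent with (indeed implied by) the \emph{failure} of $H_\kappa\prec_{\Sigma_2}H_{\kappa^+}$ in $L[r_0]$. So decoding the tower in $V$ recovers objects that exist anyway and produces no contradiction; ``lifting $\bar x_\xi$ back through the collapse'' just returns $x_0$, which lives above $\kappa$. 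Symptomatically, your argument never actually uses ``$\omega_1$ inaccessible to reals''; since $\Sigma^1_3$-Abs$(\ccc)$ alone is equiconsistent with $ZFC$, no such argument can succeed. The paper's route is genuinely different: working in $L$ with a purportedly non-reflecting $\Sigma^1_2$ statement (second order over $V_\kappa$, not $\Sigma_2$ over $H_{\kappa^+}$ of some $L[r_0]$), it uses the failure of reflection at \emph{every} $\xi<\kappa$ (condition (3) in the definition of $T$) to build a $\kappa$-Aronszajn tree $T$ in $L$, and then specializes \emph{that} tree while coding $X^\star$, the relation $E$, and the levels of $T$ itself into the reals $s_\xi$. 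The contradiction engine is: if some $L_\beta[r]$ saw no reflection below $\alpha=\omega_1^{L_\beta[r]}$, it could reconstruct $(s_\xi)_{\xi<\alpha}$, hence a branch of $T$ of order type $\alpha$ together with an order-preserving map into the rationals --- impossible. The resulting $\Pi_1$ property of $r$ needs no uncountable parameter, survives $\Sigma^1_3$-absoluteness, and ``$\omega_1$ inaccessible to reals'' is then used exactly once, to apply that property to $\beta=\omega_2^{L[r]}$ and obtain reflection at some $\xi\le\omega_1^{L[r]}<\kappa$ in $L$.

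The upper bound is also off. A finite-support iteration containing $\mathrm{Coll}(\omega,\omega_1)$ factors is not ccc, and bookkeeping cannot ``have already addressed'' a ccc poset $P$ of the final model: $P$ is in general a new object of $V[G]$, possibly of size $2^{\omega_1}$, not anticipated at any bounded stage. The paper instead forces with the L\'evy collapse, first cuts $P$ down to size $\omega_1$ using the tree representation of $\Sigma^1_2$ sets together with an elementary-submodel and transitive-collapse argument, and then applies $H_\kappa\prec_{\Sigma_2}H_{\kappa^+}$ (in an intermediate extension, where it is preserved by Fact \ref{fct:indescribable}) to reflect the $\Sigma_2$ statement ``$\exists Q\ \Vdash_Q\phi(r)$'' from $H_{\kappa^+}$ down to $H_\kappa$, producing a small $Q'$ whose generic is absorbed by the remaining collapse. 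This reflection-of-forceability step is where the indescribability is actually used, and it is absent from your sketch.
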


\begin{proof}
First, observe that it in order to proove that an inaccessible cardinal $\kappa$ has the $\Sigma^1_2$ reflection property, it suffices to proove the seemingly weaker property where we treat \textit{all second-order quantifiers as ranging over sets of ordinals}, rather than over arbitrary subsets of a structure.
For notational reasons, we shall sometimes identify sets (those denoted by $X$, $\bar X$, $X^\star$ etc.) with their characteristic functions, and therefore write ``$X \upharpoonright \xi$'' for ``$X \cap \xi$''.
Let $\kappa$ denote ${\omega_1}^V$, and work in $L$. 
Observe $\kappa$ is inaccessible.
Let $\Phi$ be some first-order formula (with parameter in $L_\kappa$, which we suppress), and let $X^\star \in L$ be some function from $\kappa$ into $2$, such that
\[
\langle L_\kappa, X^\star \rangle \vDash \forall A \Phi(X^\star, A)
\]
We may naturally assume that for all $\xi < \kappa$ there is $A \subseteq \xi$ such that 
\[
\langle L_\xi, X^\star \upharpoonright \xi, A \rangle \vDash \neg \Phi(X^\star \upharpoonright \xi, A)\text{,}
\] 
for otherwise, we are done\footnote{In other words, we may assume $\kappa$ is not weakly compact in $L$, as witnessed by $X^\star$ and $\Phi$.}. 
Varying the well-known construction of an Aronszajn-tree whose height is an inaccessible cardinal which is not weakly compact in $L$, we now define a tree $T$ and its ordering $\leq_T$:

Elements of $T$ are tuples $(\beta,X)$, where $\beta < \kappa$ and $X \in {}^\delta 2$, for some $\delta$, and
\begin{enumerate}
\item $L_\beta = h^{L_\beta}_{\Sigma_1}(\card{X}\cup\{X\})$ (in particular, $X \in L_\beta$)\label{skolemhull}
\item $X \upharpoonright \card{X} = X^\star \upharpoonright \card{X}$
\item for all $\xi \leq \dom(X)$, there is $A \in L_\beta$, a subset of $\xi$, such that 
\[
\langle L_\xi, X \upharpoonright \xi, A \rangle \vDash \neg \Phi( X \upharpoonright \xi, A).
\]
\label{counterex}
\end{enumerate}
Define $(\beta,X)\leq_T (\bar\beta, \bar X) \iff$ $X\leq_L \bar X$ and there is a $\Sigma_1$-elementary embedding $\sigma:L_\beta \rightarrow L_{\bar\beta}$ such that $\sigma(X)=\sigma(\bar X)$ and $\sigma$ is the identity on $\card{X}$.
This can be motivated by observing that branches correspond to a failure of reflection, as will become clear in a moment.

Let's check $\leq_T$ is a tree order.
Clearly, $\leq_T$ is transitive and reflexive.
Also, $\leq_T$ is antisymmetric: assume $(\beta, X) \leq (\beta',X')$ and $(\beta',X') \leq (\beta, X)$.
As $X=X'$, the embedding witnessing $(\beta, X)\leq_T (\beta',X')$ shows $L_\beta$ is isomorphic to $h^{L_{\beta'}}_{\Sigma_1}(\card{X'}\cup\{X'\})$ (by Fact \ref{skolemupper}); but the latter is just $L_{\beta'}$, by item (\ref{skolemhull}) in the definition of $T$.
It remains to check that any two predecessors of a node are comparable: say $(\beta, X)$, $(\beta', X') \leq_T (\bar \beta, \bar X)$, as witnessed by embeddings $\sigma$ and $\sigma'$.
Without loss of generality assume $X \leq_L X'$, whence also $\card{X}\leq \card{X'}$ (if not, since $X'$ is a function on an ordinal, $X' \in L_{\card{X'}^+} \subseteq L_{\card{X}}$, contradiction).
So (once more using Fact \ref{skolemupper}) $\ran(\sigma)=h^{L_{\bar \beta}}_{\Sigma_1}(\card{X}\cup\{ \bar X \}) \subseteq h^{L_{\bar \beta}}_{\Sigma_1}(\card{X'}\cup\{ \bar X \})=\ran(\sigma')$, whence $(\sigma')^{-1} \circ \sigma$ is a well-defined elementary embedding and so  $(\beta, X) \leq_T (\beta', X')$.

We now show $T$ is a $\kappa$-Aronszajn tree.
First observe that for a node $(\bar \beta,\bar X)$ of $T$ and a cardinal $\alpha \leq \bar \beta$, there is \textit{exactly one} $t\leq_T (\beta,X)$ of cardinality $\alpha$.
Existence: look at the transitive collapse $L_\beta$ of $h^{L_{\bar \beta}}_{\Sigma_1}(\alpha\cup\{ \bar X \})$ and let $X$ denote the image of $\bar X$ under the collapsing map (let $\sigma$ denote the inverse of this map).
Then $\card{X}=\alpha$, so $L_\beta = h^{L_{\beta}}_{\Sigma_1}(\card{X}\cup\{ X \})$, by Fact \ref{skolemlower}.
Item (\ref{counterex}) holds for $(\bar \beta, \bar X)$, so by a Skolem hull argument, it also holds for $(\beta,X)$.
So $(\beta, X) \in T$.
If $\alpha < \card{\bar X }$, $X \leq_L \bar X$, and $\sigma$ witnesses $(\beta,X) \leq_T (\bar \beta,\bar X)$.
If $\alpha = \card{\bar X }$, by item (\ref{skolemhull}), $X=\bar X$ and  $\beta=\bar \beta$.
Uniqueness: say $(\beta, X)$, $(\beta', X') \leq_T (\bar \beta, \bar X)$, and $\alpha=\card{\beta}=\card{\beta'}$.
By Fact \ref{skolemupper}, both $\langle L_\beta, X\rangle$ and $\langle L_{\beta'}, X' \rangle$ are isomorphic to $h^{L_{\bar \beta}}_{\Sigma_1}(\alpha\cup\{ \bar X \})$, so they are identical.
As a corollary we obtain that if $(\beta,X) \in T$ and $\card{\beta} = \omega_\alpha$, $(\beta,X)$ has exactly $\alpha$ predecessors in $\leq_T$, i.e. the height of $(\beta,X)$ in T is $\alpha$.
So $T\upharpoonright \alpha \subseteq L_{\omega_\alpha}$ ($T$ has small levels).
$T$ has height at least $\kappa$: Let any $\alpha < \kappa$ be given. Let $X := X^\star \upharpoonright \omega_\alpha$ and let $L_\beta$ be the transitive collapse of $H:=h^{L_{\kappa}}_{\Sigma_1}(\omega_\alpha\cup\{ X \})$.
It is easy to check that $(\beta,X) \in T$ (for item (\ref{counterex}), observe that $\dom(X) = \omega_\alpha \in H$) and we have seen its height is exactly $\alpha$.
To conclude that $T$ is Aronszajn (in $V$), it remains to check:
\begin{lem}
$T$ does not have a branch of order-type $\kappa$ in $V$.
\end{lem}
\begin{proof}
Else, let $(\beta(\alpha), X(\alpha))_{\alpha < \kappa}$ be such a branch.  Let $\sigma^{\bar \alpha}_\alpha: L_{\beta(\alpha) } \rightarrow_{\Sigma_1}L_{\beta(\bar \alpha)}$ be the embedding witnessing $(\beta(\alpha), X(\alpha)) \leq_T (\beta(\bar \alpha), X(\bar \alpha))$.
A straightforward argument involving the $\Sigma_1$-definable Skolem functions shows that for $\alpha < \alpha' < \bar \alpha$, $\sigma^{\bar \alpha}_{\alpha'}\circ \sigma^{\alpha'}_{\alpha}=\sigma^{\bar \alpha}_{\alpha}$.
As $\kappa$ has uncountable cofinality, the direct limit of this chain of models is well-founded and a model of $V=L$, therefore isomorphic to some $L_\delta$.
Each $L_{\beta(\alpha)}$ is $\Sigma_1$-elementarily embeddable into $L_\delta$ via a map that is the identity on $\card{\beta(\alpha)}^L$, and all the $X(\alpha)$ are mapped to one $X_0$ which must therefore end-extend $X^\star$ (in the sense that $X_0 \upharpoonright \kappa = X^\star)$.
So $\delta > \kappa$ (as $X_0 \in L_\delta$).
By elementarity (and condition \ref{counterex} in the definition of $T$), there is $A \in L_\delta$, a subset of $\kappa$, such that $\langle L_\kappa, X_0 \upharpoonright \kappa , A \rangle \vDash \neg \Phi(X_\delta \upharpoonright \kappa ,A)$, contradiction.
\end{proof}

Let's go back to working in $L$ again, for yet a little while.
$T$ is not pruned (there are dying branches and branches that don't split), and $T$ needn't even have unique limit nodes (in the sense that for $t$ and $t'$ at a limit level $T_\lambda$, if $t$ and $t'$ have the same predecessors, then $t=t'$).
The latter shortcoming has to be remedied, and this is accomplished easily by replacing $T$ by $T'$, where $T'\upharpoonright \omega = T \upharpoonright \omega$, $T'_{\alpha+1}=T_{\alpha}$ for any infinite ordinal $\alpha<\kappa$, while for limit ordinals $\lambda$ we set $T'_\lambda=\{\pred(t)  | t \in T_\lambda \}$.
$T'$ carries the obvious order ($t\leq_{T'} t'$ exactly if either $t \subseteq t'$ or $t \in t'$ or $t \subseteq \pred(t')$ or $t \leq_T t'$).

Fix $\delta^\star$ such that $X^\star \in L_{\delta^\star}$.
Pick $E$, a binary relation on $\kappa$, such that
\begin{enumerate}
\item $\langle \kappa, E \rangle \cong \langle L_{\delta^\star}, \in \rangle$, and
\item $X^\star(\xi)=1 \iff (\xi +1)\; E \; \emptyset$.

\end{enumerate}
Define $C:= \{ \xi < \kappa | \xi$ is a cardinal and $\langle L_\xi, X^\star\upharpoonright\xi, E\cap(\xi\times\xi) \rangle \prec \langle L_\kappa, X^\star, E\rangle \}$.
By inaccessibility of $\kappa$ this is a $cub$ set.
Let $C$ be enumerated as $(c_\xi)_{\xi<\kappa}$.

Now we work in $V$:
let $s_\xi$ be a real coding, in some absolute manner, the tuple 
\[(T_{\xi+1},X^\star\upharpoonright c_\xi, E\cap (c_\xi \times c_\xi)).\]
Apply the forcing just described (Fact \ref{fct:ccc-resh}) to code the sequence $S=(s_\xi)_{\xi<\omega_1}$ into a single real $r$, using $T$.

Consider any $\beta < \kappa$ such that $L_\beta[r]$ is a model of ``$ZF^-$ and $\omega_1$ exists''. Let $\alpha$ denote $\omega_1^{L_\beta[r]}$.
We claim that for some $\xi \leq \alpha$, there is $x \in L_\beta \cap \mathcal P (\xi)$ such that for all $a \in L_\beta \cap \mathcal P (\xi)$, $\langle L_\xi, x, a\rangle \vDash \Phi(x,a)$, i.e.
that from the point of view of $L_\beta$, reflection occurs before or at $\omega_1$.
Assume otherwise; we show how to recursively reconstruct $(s_\xi)_{\xi < \alpha}$ inside $L_\beta[r]$, and then obtain a contradiction.
We construct  $(s_\xi)_{\xi < \eta}$ by recursion on $\eta \leq \alpha$.
$s_0 \in L_\beta[r]$ is immediate.
Now say $\eta=\gamma+1$: by induction hypothesis $(s_\xi)_{\xi \leq \gamma} \in L_\beta[r]$, so $T\upharpoonright \gamma+2 \in L_\beta[r]$. 
As in the proof of Fact \ref{fct:ccc-resh}, utilizing the specializing functions on that tree (coded recursively by $r$), we obtain $s_{\gamma+1} \in L_\beta[r]$.

We shall now consider two cases simultaneously, since the next few steps of the argument are identical for both:
\begin{enumerate}
\item \label{omega1}$\eta < \alpha$ is a limit ordinal; in this case, we must show how to continue the construction of $(s_\xi)_{\xi<\alpha}$.
\item \label{recursion}we have constructed $(s_\xi)_{\xi<\alpha}$ and $\eta=\alpha$; this leads to a contradiction.
\end{enumerate}
In any case, we may assume $(s_\xi)_{\xi<\eta} \in L_\beta[r]$, whence $X^\star \upharpoonright c_\eta$, $E \cap (c_\eta \times c_\eta) \in L_\beta[r]$.
$E \cap (c_\eta \times c_\eta)$ is of course a well founded relation, and by the definition of $C$ and elementarity, its transitive collapse is equal to some $L_{\zeta^\star}$ such that $X^\star \upharpoonright c_\eta \in L_{\zeta^\star}$ and $\zeta^\star < \beta$.

To be sure the construction of $(s_\xi)_{\xi < \alpha}$ takes place entirely in $L_\beta[r]$, we feel we should mention the triviality that since $(\omega_\eta)^L < \beta$, $(\omega_\eta)^L=(\omega_\eta)^{L_\beta}$.
Work in $L$.
Since $c_\eta \geq \omega_\eta$, $X^\star \upharpoonright \omega_\eta \in L_\beta$.
For each $\xi < \eta$, look at the transitive collapse $L_{\beta(\xi)}$ of $h^{L_\beta}_{\Sigma_1}(\omega_\xi \cup \{X^\star \upharpoonright \omega_\eta\})$, and let $X(\xi)$ be the image of $X^\star \upharpoonright \omega_\eta$ under the collapsing map.
By definability of the Skolem-hull operator and by Replacement in $L_\beta$, the sequence $(\beta(\xi),X(\xi))_{\xi < \eta}$ is an element of $L_\beta$.
Observe that for $\xi < \eta$, $c_\xi$ is countable in $L_\beta[r]$, so $c_\eta \leq \alpha$ and thus $\omega_\eta \leq \alpha$. 
Hence, by assumption, for each $\xi \leq \omega_\eta$ there is $a \in L_\beta$, $a \subseteq \xi$, such that $\langle L_\xi,X^\star\upharpoonright \xi,  a \rangle\vDash \neg \Phi(X^\star\upharpoonright \xi,a)$.
This ensures item (\ref{counterex}) in the definition of $T$ holds for each $(\beta(\xi), X(\xi))$, so arguing just as in the proof showing $T$ is Aronszajn, we have $(\beta(\xi), X(\xi)) \in T$ and its height is $\xi$.
So we have found a branch $b$ of order-type $\eta$ in $T \upharpoonright \eta$, $b \in L_\beta$.

We work in $V$ again. Once more, as in the proof of Fact \ref{fct:ccc-resh}, we may recover, for all $k$, $F_k \upharpoonright (b\times \omega)$ from $b$ and $r$, inside $L_\beta[r]$ (observe only one node on each level suffices for the construction).
Now we proceed to argue by cases: in case (\ref{omega1}), $b$ has order-type $\alpha = {\omega_1}^{L_\beta[r]}$, contradicting $F_0 \upharpoonright (b\times \omega) \in L_\beta[r]$, as $F_0$ yields an order-preserving function from $b$ into the rationals.
For case (\ref{recursion}), we must show $s_\eta \in L_\beta[r]$, and indeed, this holds as $n \in s_\eta \iff \sup\{ F_0(t,n) | t \in b \} \in Q_0$. This finishes the proof of the claim.

So we have found, after forcing with a $\ccc$ partial order, a real $r$ with the $\Pi_1$ property
\[
\begin{array}{c}
\forall \beta < \omega_1 \text{, if }L_\beta[r]\vDash \text{``}ZF^-\text{ and $\omega_1$ exists'', then}\\
\exists \xi \leq (\omega_1)^{L_\beta[r]}\text{ such that }(L_\xi\vDash \exists X \forall A \Phi(X,A))^{L\beta}
\end{array}
\]
By $\Sigma^1_3$-absoluteness, we may assume that $r$ is in the ground model.
But since $\omega_1$ is inaccessible to reals, we may look at $\beta:= (\omega_2)^{L[r]}$.
By the above, for some $\xi \leq (\omega_1)^{L_\beta[r]}$, $(L_\xi\vDash \exists X \forall A \Phi(X,A))^{L\beta}$, and therefore $(L_\xi\vDash \exists X \forall A \Phi(X,A))^L$.
This completes one direction of the proof.

For the other direction, assume $\kappa$ is $\Sigma^1_2$-indescribable.
We show that after forcing with the L\'evy-collapse of $\kappa$, $\Sigma^1_3$-absoluteness for $\ccc$ forcing holds and $\kappa$ is inaccessible to reals.
The latter is clear, as any real in the extension can be absorbed into an intermediate model where $\kappa$ is still inaccessible.

In $V^{Coll(\omega,  \kappa  )}$, let $P$ be a $\ccc$ partial order which forces a $\Sigma^1_3(r)$ statement $\phi(r)$, $r \in V^{Coll(\omega, \{ \alpha \} )}$, for some $\alpha < \kappa$.

Firstly, we can assume that $\card{P} = \omega_1$: using the tree representation of $\Sigma^1_2$ sets, write $\phi(r)$ as ``there is a real $x$ such that $T(x)$ is well-founded''. Here, $T$ is a tree on $\omega_1$ which is $\Delta_0$ definable in the parameters $r$ and $\omega_1$. 
So $\Vdash_{P} ``\exists \dot{x}$ such that $T(\dot{x})$ is well-founded''.
As $P$ has the $\ccc$, there is $\xi < \omega_2$ such that $\Vdash_{P} rank(T(\dot{x}))<\check{\xi}$, and there is a name $\dot{F}$ for a ranking function on $T(\dot{x})$, $\card{\dot{F}} = \omega_1$. Now let $M$ be an elementary submodel of $H_{\omega_2}$ such that $\dot x, \dot F$ and $ P$ are elements of $M$ and $\omega_1 +1 \subseteq M$. As the forcing relation for $\Delta_0$ sentences is uniformly $\Delta_0$ definable for transitive models of $ZF^-$, we can take the transitive collapse of $M$ and we have $\Vdash_{P'}$``$\dot F'$ is an order preserving function from $T(\dot x')$ into the ordinals'', where $\dot x', \dot F'$ and $P'$ are the images of $\dot x, \dot F$ and $ P$ under the collapsing map. Thus, since $P'$ preserves $\omega_1$, $\Vdash_{P'}\phi(r)$. This proves we can assume $P$ has size $\omega_1$.

For the moment, we work in $ W:=V^{Coll(\omega, \{ \alpha \} )}$.
Let $\dot P$ be a $Coll(\omega, \kappa)$-name for $P$. As $Coll(\omega, \kappa) \in H_{\kappa^+}$, we may assume $\dot P \in H_{\kappa^+}$, 
whence
\begin{equation}\label{forcable}
H_{\kappa^+}\vDash \exists Q \Vdash_Q \phi(r),
\end{equation}
as witnessed by $Q:=Coll(\omega, \kappa ) * P$.
In $W^Q$,  
\[
\phi(r) \iff H_{\kappa^+}\vDash \exists u \forall w \psi  (u,w,r)
\]
for a suitable $\Delta_0$ formula $\psi$ (e.g. such that $\forall w \psi(u,w,r)$ says that a certain tree $T(u,r)$ on $\omega$ is ill-founded, i.e. has no ranking function; use the tree representation for $\Pi^1_1$ sets).
Using this equivalence and arguing as in \ref{fct:indescribable}(\ref{forcing}), in $W$ \eqref{forcable} is equivalent to 
\begin{equation}\label{forcable2}
\exists Q \in H_{\kappa^+} \quad\exists \dot{u} \in H_{\kappa^+} \quad \forall \dot{w} \in H_{\kappa^+} \quad H_{\kappa^+} \vDash \psi'(\dot{u},\dot{w},r,Q),
\end{equation}
where $\psi'$ is $\Delta_1$. By $\Sigma^1_2$-indescribability of $\kappa$, \eqref{forcable2} holds with $\kappa^+$ replaced by $\kappa$. As \eqref{forcable} and \eqref{forcable2} are still equivalent when $\kappa^+$ is replaced by $\kappa$, there is $Q' \in H_\kappa$, $\Vdash_{Q'} \phi(r)$.

In $V^{Coll(\omega, \kappa)}$, there is $H$ which is generic for $Q'$ over $W$, as $\card{\mathcal P (Q')}^W$ is collapsed to $\omega$.
$\phi(r)$ holds in $W[H]$, and $\phi(r)$ is upward absolute between $W[H] \subseteq V^{Coll(\omega, \kappa)}$ and $V^{Coll(\omega, \kappa)}$.
So $\phi(r)$ holds in $V^{Coll(\omega, \kappa)}$, whence $\Sigma^1_3$-absoluteness holds between this model and any subsequent $\ccc$ extension.
\end{proof}

\section*{Open questions}

What are other applications of lightface indescribable cardinals? E.g.
what is the consistency strength of ``two-step'' $\Sigma^1_3$-absoluteness for $\ccc$ forcing  plus $\omega_1$ inaccessible to reals? Two-step $\Sigma^1_3$-absoluteness for $\ccc$ forcing means that for any $\ccc$ forcing $P$ and a $P$-name $\dot Q$ such that $P$ forces ``$\dot Q$ is a $\ccc$ partial ordering'', ${H_{\omega_1}}^V \prec_{\Sigma_2} {H_{\omega_1}}^{V^P}$ and $H_{\omega_1}^{V^P} \prec_{\Sigma_2} {H_{\omega_1}}^{V^{P\star \dot Q}}$.

\bibliography{biblio}
\bibliographystyle{amsalpha}

\end{document}